\theoremstyle{theorem}
\newtheorem{theorem}{Theorem}
\newtheorem{corollary}[theorem]{Corollary}
\theoremstyle{definition}
\newtheorem*{remark}{Remark}
\newcommand{\M}{\mathbb{M}}
\newcommand{\N}{\mathbb{N}}
\newcommand{\D}{\mathbb{D}}
\newcommand{\C}{\mathbb{C}}
\newcommand{\cB}{\mathcal{B}}
\newcommand{\cF}{\mathcal{F}}
\DeclareMathOperator{\adj}{adj}
\DeclareMathOperator{\re}{Re}
\DeclareMathOperator{\trace}{trace}
\newcommand{\Mydef}{\stackrel{\mbox{\footnotesize{\rm def}}}{=}}
\begin{document}
\title{Maximum Principles for Matrix-Valued Analytic Functions}
\markright{Maximum Principles}
\author{Alberto A. Condori}
\maketitle
\begin{abstract}
			To what extent is the maximum modulus principle for scalar-valued 
			analytic functions valid for matrix-valued analytic functions?  
			In response, we discuss some maximum \emph{norm} principles 
			for such functions that do not appear to be widely known, deduce 
			maximum and minimum principles for their singular values, and
			make some observations concerning resolvents and matrix exponentials.
\end{abstract}

\section{Introduction.}

The maximum modulus principle (MMP) is a fundamental result in complex analysis.
It is often used to deduce other important results such as the fundamental 
theorem of algebra, the open mapping theorem (i.e., analytic functions map open 
sets to open sets), Schwarz's lemma, the Phragm\'{e}n--Lindel\"{o}ff principle, 
etc.  One of its various formulations states that if $f$ is a \emph{scalar-valued} 
function, analytic on a region $\Omega$ (i.e., a nonempty open connected subset) 
of the complex plane $\C$, whose \emph{modulus} attains a local maximum in $\Omega$,
then $f$ is constant on $\Omega$.  
For a proof of the MMP, we refer the reader to \cite[Chapter 10]{R}. 

Many differential equations encountered in science and engineering lead to the 
consideration of \emph{matrix-valued} functions, that is, functions with range
in the set $\M_{n}$ of $n\times n$ matrices, $n>1$, with entries in $\C$.  For 
instance, the standard model of an RLC circuit in electrical engineering admits 
the formulation $x^{\prime}(t)=A\cdot x(t)$, where $A \in \M_{n}$ and $x$ is a 
function with values in $\C^{n}$.  The vector-valued solutions $x(t)=\exp(tA)x_0$ 
to such an equation (with $x_0\in\C^{n}$) depend on the matrix-valued function 
$t\mapsto\exp(tA)=\sum_{k=0}^{\infty}A^{k}t^{k}/k!$, and the decay of these 
solutions is controlled by its operator norm $\|\exp(tA)\|$.  As usual, 
$\|T\|=\sup\{\|Tv\|_{\C^{n}}: \|v\|_{\C^{n}}=1\}$ is the \emph{operator norm}
of $T\in\M_n$ induced by the Euclidean norm on $\C^{n}$, namely $\|v\|_{\C^{n}}
=\left(|v_1|^2+\cdots+|v_n|^2\right)^{1/2}$ when $v=(v_1,\ldots,v_n)$.

In linear algebra, too, matrix-valued functions arise (implicitly) in the study of 
eigenvalues, i.e., the spectrum $\sigma(A)$ of $A\in\M_n$. After all, $\lambda\in\C$ 
satisfies $Av=\lambda v$ for some nonzero vector $v\in\C^{n}$ if and only if the 
\emph{resolvent} function $z\mapsto (A-zI)^{-1}$ has a singularity at $z=\lambda$, 
i.e., $A-\lambda I$ is not invertible.  (Throughout, $I=I_{n}$ denotes the identity in 
$\M_{n}$.)  Since the spectrum is often insufficient for the analysis of non-normal 
matrices (see \cite{T}), focus has shifted to the study\footnote{Equivalently, one 
may study the so-called ``pseudospectra'' of $A$.  For an overview of that subject, 
see \cite{TE}.} of the norm of the resolvent $\|(A-zI)^{-1}\|$.  For instance, the 
norm of the resolvent alone characterizes when $A$ is a normal matrix \cite{BC}.

Thus, it is of interest to study the (operator) norms of the matrix-valued 
functions $\exp(zA)$ and $(A-zI)^{-1}$.  As can be expected, these functions 
are analytic\footnote{Recall that a function $F:\Omega\to\M_{n}$ is analytic if, 
for each $z_0\in\Omega$, there is a member of $\M_n$, denoted by $F^{\prime}(z_0)$, 
such that $\|(z-z_0)^{-1}\{F(z)-F(z_0)\}-F^{\prime}(z_0)\|\to 0$ as $z\to z_0$. 
It can be shown that $F:\Omega\to\M_{n}$ is analytic if and only if $F$ is 
``entry-wise analytic,'' i.e., every entry of $F(z)$ is an analytic function 
on $\Omega$.} in regions of $\C$, the entire plane $\C$, and $\C\backslash\sigma(A)$, 
respectively.  The fact these functions are analytic leads one to question the extent 
to which the MMP for scalar-valued functions is valid for $\|F(z)\|$, where $F$ is 
any matrix-valued analytic function.  The purpose of this article is to find sufficient
conditions, say involving the norm of a matrix-valued analytic function, that guarantee
that the function is constant.
 
In Section \ref{sectionMaxNormPrinciples}, we state and discuss some maximum norm 
principles for matrix-valued analytic functions.  Although it has been long known that 
a direct analog of the MMP fails in the context of matrix-valued functions in which
the operator norm plays the role of the modulus, we find a suitable analog.  Stated 
roughly, if $F:\Omega\to\M_n$ is such that $\|F(z)\|$ attains a maximum at some 
$z_0\in\Omega$, then there is a direction in which $F(z)$ is constant (although
$F(z)$ need not be) namely that of any maximizing vector of $F(z_0)$ (see Theorem 
\ref{MONPmaxVector}).  We rediscovered this result originally noted by Brown and 
Douglas in \cite{BD} and use it to describe the structure of the function 
$F(z)$ (see Theorem \ref{MONPmaxVectorFactorization}).  Since the result lends itself 
to iteration, we make natural assumptions on the function's singular values and explore 
the consequences further in Section \ref{sectionMaxSkPrinciples}.  One of the section's
main results (see Corollary \ref{constantFThm}) illuminates the equivalence of two
apparently distinct statements to the single statement that the matrix function $F(z)$ 
is constant:  the Frobenius norm of $F(z)$ attains a maximum, and every singular
value of $F(z)$ attains a maximum (at possibly distinct points).

Once the maximum singular-values principle is established in Section 
\ref{sectionMaxSkPrinciples}, we proceed to prove a minimum
singular-values principle in Section \ref{sectionMinSkPrinciples}.  
That result (Theorem \ref{MSVP}) is, in a sense, an analog of the 
well-known minimum modulus principle of complex analysis in the context 
of matrix-valued functions.  Finally, in Section \ref{sectionExamples},
we discuss the implications of our results in the context of the resolvent 
and the matrix exponential which involve their largest and smallest singular 
values.

It is worth mentioning that analytic matrix-valued functions appear in many other
areas such as the harmonic analysis of operators on a Hilbert space (e.g., finite-rank 
perturbations of self-adjoint and unitary operators), and consequently in mathematical 
physics (e.g., Schr\"{o}dinger operators); roughly, problems concerning spectral properties
of an operator are often solved through the consideration of an analytic matrix-valued 
function defined on the upper-half plane, i.e. the so-called ``characteristic function.'' 
Due to the scope of the paper, the reader is referred to the survey \cite{N} and all 
references therein for further details.

We also remark that the results of this article could be written in the more general 
framework of operator-valued functions $F:\Omega\to\cB(H)$, where $H$ is a complex 
Hilbert space, or that of vector-valued functions $F:\Omega\to B$, where $B$ is a 
complex Banach space.  However, all statements in this article are kept in the context 
of matrix-valued functions so that the results are easier to read and appeal to a wider 
audience.

\section{Maximum norm principles.}\label{sectionMaxNormPrinciples}

To find a suitable analog of the MMP for matrix-valued functions, it is reasonable 
to first test whether known proofs of the MMP can be easily adapted when replacing 
modulus with operator norm. One such proof of the MMP appears in \cite[Chapter 10]{R}.
In it, the identity $|w|^2=w\bar{w}$ ($w\in\C$) appears, and although the operator
norm of $T\in\M_n$ does not readily provide a direct analog for $\|T\|^{2}$, the
Frobenius norm does.  In fact,\footnote{As usual, $T^{*}$ 
denotes the conjugate transpose of the matrix $T$.}
\begin{equation}\label{FrobeniusNormDef}
			\|T\|_{\cF}^2=\trace(T^{*}T)\;\text{ for }T\in\M_n,
\end{equation}
when $\|T\|_{\cF}$ is the Frobenius (Hilbert--Schmidt) norm of $T$, and an argument
analogous to the proof of the MMP in \cite{R} (that relies on \eqref{FrobeniusNormDef}) 
gives the following result.

\begin{theorem}[Maximum Frobenius Norm Principle]\label{MFNP}
			Let $\Omega$ be a region of $\C$ and let 
			$F:\Omega\to\M_n$ be analytic.  If $\|F(z)\|_{\cF}$ assumes its maximum 
			at some $z_0\in\Omega$, then $F(z)=F(z_0)$ for all $z\in\Omega$.
\end{theorem}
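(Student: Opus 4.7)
My plan is to mimic the Rudin-style proof of the scalar MMP, with the Hilbert-space identity \eqref{FrobeniusNormDef} playing the role of $|w|^2 = w\bar{w}$. First, I would fix $z_0 \in \Omega$ and choose $r>0$ small enough that the closed disk $\{z \in \C : |z-z_0|\le r\}$ lies in $\Omega$. On this disk, expand $F$ in its matrix-valued Taylor series
\[
F(z) \;=\; \sum_{k=0}^{\infty} A_k (z - z_0)^k, \qquad A_k \in \M_n,
\]
so that $A_0 = F(z_0)$.

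The heart of the argument is a matrix-valued analogue of Parseval's identity on the circle of radius $r$:
\[
\frac{1}{2\pi} \int_{0}^{2\pi} \bigl\| F(z_0 + re^{i\theta})\bigr\|_{\cF}^{2}\, d\theta \;=\; \sum_{k=0}^{\infty} \|A_k\|_{\cF}^{2}\, r^{2k}.
\]
To obtain this I would substitute the Taylor series of $F$ and $F^*$ into $\|F\|_{\cF}^{2} = \trace(F^{*}F)$, exchange trace and summation, and use the orthogonality relation $\frac{1}{2\pi}\int_{0}^{2\pi} e^{i(k-j)\theta}\,d\theta = \delta_{jk}$; term-by-term integration is legitimate by uniform convergence of the double series on the circle.

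Because $\|F(z_0)\|_{\cF} = \|A_0\|_{\cF}$ is a local maximum, the integrand above is bounded by $\|A_0\|_{\cF}^{2}$ for all sufficiently small $r$, so the average is at most $\|A_0\|_{\cF}^{2}$. Comparing with the Parseval identity yields $\sum_{k\ge 1} \|A_k\|_{\cF}^{2}\, r^{2k} \le 0$, which forces $A_k = 0$ for every $k\ge 1$. Thus $F \equiv F(z_0)$ on a neighborhood of $z_0$, and a standard open/closed argument in the connected region $\Omega$ (equivalently, the classical identity theorem applied entrywise) extends the equality $F(z)=F(z_0)$ to all of $\Omega$.

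I expect the only real technical point to be the Parseval identity itself, where one must carefully handle the double sum inside the trace and justify the interchange of summation and integration. Once that identity is in hand, the implication ``local max at $z_0$ $\Rightarrow$ higher-order coefficients vanish'' is immediate from the positivity of the terms $\|A_k\|_{\cF}^{2} r^{2k}$, so the remainder of the proof is a verbatim translation of the scalar argument.
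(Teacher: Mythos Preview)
Your proposal is correct and is exactly the ``argument analogous to the proof of the MMP in \cite{R}'' that the paper invokes but does not spell out: expand $F$ in a Taylor series about $z_0$, compute the circular mean of $\|F(z_0+re^{i\theta})\|_{\cF}^{2}=\trace(F^{*}F)$ via the orthogonality relations to obtain the Parseval identity $\sum_{k\ge0}\|A_k\|_{\cF}^{2}r^{2k}$, and conclude from the maximality at $z_0$ that $A_k=0$ for $k\ge1$. The paper carries out essentially the same computation in its proof of Theorem~\ref{MONPmaxVector} (see \eqref{meanvalueIdentity} and \eqref{inequalityForCks}), only with $\|F(z)x\|^{2}$ in place of $\|F(z)\|_{\cF}^{2}$, so your write-up matches both the indicated approach and the explicit technique used elsewhere in the paper.
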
 
Despite its provision of a direct analog of the MMP for matrix-valued functions, 
in applications, it is the operator norm that is of interest, not the Frobenius norm.  
Unfortunately, the conclusion of Theorem \ref{MFNP} need not hold when the Frobenius 
norm is replaced by another matrix norm.  For example, let $\D$ denote the open unit 
disk centered at the origin, let $g:\D\to\D$ be analytic (e.g., $g(z)=z$), and consider 
the $2\times 2$ matrix-valued function
\begin{equation}\label{firstEx}
			F(z)=\left[
			\begin{array}{cc}
						1	&	0\\
						0	&	g(z)
			\end{array}\right].
\end{equation}
Notice that the operator norm of $F(z)$ satisfies 
\[	\|F(z)\|^2=\max\{1,|g(z)|^2\}=1\;\text{ for all }z\in\D,	\]
even though $F(z)$ is not a constant function.  Nevertheless, one can prove a 
weakened version for \emph{any} norm.

\begin{theorem}[Maximum Norm Principle]\label{MNP}
			Let $\Omega$ be a region of $\C$ and let 
			$F:\Omega\to\M_n$ be analytic.  If $\|F(z)\|$ attains its maximum 
			in $\Omega$, then $\|F(z)\|$ is constant on $\Omega$.
\end{theorem}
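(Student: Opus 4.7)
The plan is to reduce the matrix-valued statement to the classical scalar MMP by a clever choice of scalar ``coordinate'' function. Let $z_0\in\Omega$ be a point where $\|F(z)\|$ attains its maximum, set $M=\|F(z_0)\|$, and (using compactness of the unit sphere of $\C^n$) choose a unit vector $v_0\in\C^n$ that realizes the operator norm, i.e., $\|F(z_0)v_0\|=M$. Define the unit vector $w_0=F(z_0)v_0/M$ and consider the scalar-valued function
\[
    h(z)\Mydef\langle F(z)v_0,w_0\rangle,\qquad z\in\Omega.
\]
Since $F$ is entry-wise analytic, $h$ is an analytic function on $\Omega$.

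Next, I would establish the two-sided control $|h(z)|\le\|F(z)\|\le M$ on $\Omega$. The right inequality is just the hypothesis that $z_0$ is a maximum; the left inequality follows from the Cauchy--Schwarz inequality together with the definition of the operator norm,
\[
    |h(z)|=|\langle F(z)v_0,w_0\rangle|\le\|F(z)v_0\|\,\|w_0\|\le\|F(z)\|.
\]
At the point $z_0$ this chain of inequalities is sharp: $h(z_0)=\langle F(z_0)v_0,F(z_0)v_0\rangle/M=M$.

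Thus the scalar analytic function $h$ satisfies $|h(z)|\le M$ on $\Omega$ with equality at the interior point $z_0$. The classical MMP for scalar analytic functions then forces $h$ to be the constant $M$ on $\Omega$. Feeding this back into the inequality $M=|h(z)|\le\|F(z)\|\le M$ yields $\|F(z)\|=M$ for every $z\in\Omega$, which is the desired conclusion.

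The only subtlety worth flagging is the need to produce a genuine maximizing vector $v_0$ for $F(z_0)$; this is free in the matrix setting because of finite-dimensionality, and it is the key ingredient that lets us manufacture a single scalar analytic ``slice'' of $F$ which simultaneously majorizes $\|F(z)\|$ pointwise and hits the extremal value at $z_0$. Everything else is just Cauchy--Schwarz and the scalar MMP, so there is no real obstacle beyond this choice of $v_0$ and $w_0$.
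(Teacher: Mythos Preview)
Your argument is correct (just add the one-line disclaimer that if $M=0$ then $F\equiv 0$ and there is nothing to prove; otherwise $w_0$ is well defined).  It follows the same overall scheme as the paper---reduce to the scalar MMP via a well-chosen scalar analytic function---but realizes the key step differently.  The paper invokes the Hahn--Banach theorem to produce an abstract norm-one linear functional $\Lambda:\M_n\to\C$ with $\Lambda(F(z_0))=\|F(z_0)\|$ and then applies the scalar MMP to $\Lambda\circ F$; because that argument never looks inside the norm, it works verbatim for \emph{any} norm on $\M_n$.  You instead exhibit the functional explicitly as $T\mapsto\langle Tv_0,w_0\rangle$ by choosing a maximizing vector $v_0$ for $F(z_0)$, which is specific to the operator norm but completely elementary (no Hahn--Banach needed).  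The trade-off is generality versus concreteness: the paper's proof handles every matrix norm at once, while yours builds in exactly the extra structure---the maximizing vector---that the paper goes on to exploit in the sharper operator-norm results (Theorems~\ref{MONPmaxVector} and~\ref{MONPmaxVectorFactorization}).
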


Theorem \ref{MNP} is well known and a proof can be found in \cite[Section III.14]{DS};
we provide a different short proof based on a well-known consequence 
of the Hahn--Banach theorem on linear functionals, namely if $X$ is any 
normed space and $x\in X$ is nonzero, then there is a bounded linear functional 
$\Lambda$ on $X$ such that $\|\Lambda\|=1$ and $\Lambda(x)=\|x\|$.  For further 
details and a simple proof of this fact, see \cite[Chapter 5]{R}.

\begin{proof}[Proof of Theorem \ref{MNP}]
			Assume there is a $z_0\in\Omega$ such that $\|F(z)\|\leq\|F(z_0)\|$ for all 
			$z\in\Omega$ and, without loss of generality, that $\|F(z_0)\|\neq0$.  Then
			we can choose a bounded linear functional $\Lambda:\M_n\to\C$  of norm $1$ so 
			that $\|F(z_0)\|=\Lambda(F(z_0))$.  By continuity of $\Lambda$ and analyticity
			of $F$, $\Lambda(F(z))$ defines an analytic function on $\Omega$, and
			\[	|\Lambda(F(z))|\leq\|F(z)\|\leq\|F(z_0)\|=|\Lambda(F(z_0))|.	\]
			It follows now from the usual MMP that $\Lambda(F(z))$ must be constant
			throughout $\Omega$ and
			\[	\|F(z)\|\geq\Lambda(F(z))=\Lambda(F(z_0))=\|F(z_0)\|\geq\|F(z)\|
					\quad\text{ for all }z\in\Omega.	\]
			Thus, $\|F(z)\|=\|F(z_0)\|$ for all $z\in\Omega$.
\end{proof}

The conclusion of the maximum norm principle above may be seen as 
unsatisfactory because it gives limited information about the structure 
of $F(z)$ itself.  This is not at all surprising; after all, the theorem 
holds for \emph{any} norm.  So, from now on we use the \emph{operator norm} 
exclusively in an effort to gain more information about the function $F$.

A useful property of the operator norm of a matrix is that given any $A\in\M_n$, 
there is a unit vector $x_0\in\C^n$, called a \textbf{maximizing vector} for 
$A$, so that $\|Ax_0\|=\|A\|$; in other words, matrices attain their operator norm
at some vector in the unit ball of $\C^{n}$.  This is a consequence of the compactness 
of the closed unit ball of $\C^{n}$.

Recently, we rediscovered a maximum operator norm principle due to Brown 
and Douglas.  In \cite[Theorem 4]{BD}, the authors proved that if $F(z)$ 
is a nonconstant matrix-valued analytic function whose operator norm attains its
maximum, then there is a direction $x_0$ in which $F(z)x_0$ is constant. Our version 
reads as follows.

\begin{theorem}[Maximum Operator Norm Principle, cf. \cite{BD}]\label{MONPmaxVector}
			Let $\Omega$ be a region of $\C$ and let 
			$F:\Omega\to\M_n$ be analytic.  If there is a $z_0\in\Omega$ so that 
			$\|F(z)\|\leq \|F(z_0)\|$ for all $z\in\Omega$ and $x_0$ is a maximizing
			vector for $F(z_0)$, then $F^{(k)}(z_0)x_0=0$ for every $k\geq1$.  
			In particular, $F(z)x_0$ is constant on $\Omega$.
\end{theorem}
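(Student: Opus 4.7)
The plan is to reduce the statement to the scalar maximum modulus principle by pairing $F(z)x_0$ against a carefully chosen unit vector. We may assume $F(z_0)\neq 0$, since otherwise the hypothesis forces $F\equiv 0$ on $\Omega$ and the conclusion is vacuous. First, I would set
$$y_0 \Mydef \frac{F(z_0)x_0}{\|F(z_0)\|},$$
which is a unit vector of $\C^{n}$ because $\|x_0\|=1$ and, by the maximizing property, $\|F(z_0)x_0\|=\|F(z_0)\|$.

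Next, I would introduce the scalar function $\varphi:\Omega\to\C$ defined by
$$\varphi(z) = \langle F(z)x_0,\, y_0\rangle.$$
Since every entry of $F$ is analytic on $\Omega$, so is $\varphi$. Applying Cauchy--Schwarz and the operator-norm inequality yields
$$|\varphi(z)| \le \|F(z)x_0\|\,\|y_0\| \le \|F(z)\|\,\|x_0\| \le \|F(z_0)\|$$
for every $z\in\Omega$, while a direct calculation gives $\varphi(z_0)=\|F(z_0)x_0\|^2/\|F(z_0)\|=\|F(z_0)\|$. Thus $|\varphi|$ attains its maximum at $z_0\in\Omega$, and the ordinary MMP forces $\varphi$ to equal the constant $\|F(z_0)\|$ throughout $\Omega$.

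With $\varphi$ pinned down, I would then exploit the equality case in Cauchy--Schwarz. Since $|\varphi(z)|=\|F(z_0)\|$ for every $z\in\Omega$, the chain of inequalities above must collapse to equalities. The equality $|\langle F(z)x_0, y_0\rangle| = \|F(z)x_0\|\,\|y_0\|$ forces $F(z)x_0$ to be a scalar multiple of $y_0$, and the equality $\|F(z)x_0\|=\|F(z_0)\|$ fixes the modulus of that scalar. Combined with the prescribed value of $\varphi(z)$, this gives $F(z)x_0 = \|F(z_0)\|\,y_0 = F(z_0)x_0$ for all $z\in\Omega$. The analytic function $z\mapsto F(z)x_0$ is therefore constant, and differentiating at $z_0$ yields $F^{(k)}(z_0)x_0=0$ for every $k\ge 1$.

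The whole argument hinges on recognizing the right scalar test function $\varphi$; once $\varphi$ is in place, the rest is a routine chain of standard inequalities, so I do not anticipate a serious obstacle. The only modest care needed is the invocation of the equality case of Cauchy--Schwarz to rule out any leak of $F(z)x_0$ into the orthogonal complement of $y_0$.
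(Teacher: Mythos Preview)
Your proof is correct, but it follows a different route from the paper's. The paper expands $F$ in a power series $F(z)=\sum_{k\ge0}C_k(z-z_0)^k$ on a disk, computes the mean value
\[
\frac{1}{2\pi}\int_0^{2\pi}\|F(z_0+re^{it})x\|^2\,dt=\sum_{k\ge0}\|C_kx\|^2r^{2k},
\]
and bounds the left side by $\|C_0\|^2\|x\|^2$; substituting the maximizing vector $x_0$ then kills every term with $k\ge1$ directly. Your argument instead pairs $F(z)x_0$ against the unit vector $y_0=F(z_0)x_0/\|F(z_0)\|$, applies the scalar MMP to $\varphi(z)=\langle F(z)x_0,y_0\rangle$, and recovers $F(z)x_0\equiv F(z_0)x_0$ from the equality case of Cauchy--Schwarz. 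This is essentially the shorter Brown--Douglas approach the paper alludes to; it is slicker and reuses the scalar MMP as a black box, while the paper's series computation is more self-contained and yields $F^{(k)}(z_0)x_0=0$ immediately without a separate differentiation step. Both arguments ultimately exploit the same rigidity, just packaged differently.
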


The conclusion\footnote{It is worth mentioning that our version of Theorem 
\ref{MONPmaxVector} also complements a result due to Daniluk in \cite{D}.} 
of Theorem \ref{MONPmaxVector} here is, at first sight, a slight improvement to 
that in Theorem 4 (part (1)) of \cite{BD}; after all, using a series expansion 
of $F(z)$, the condition $F^{(k)}(z_0)x_0=0$ for $k\geq 1$ easily implies that 
$F(z)x_0$ is constant on $\Omega$.  In fact, the reverse implication is also 
true and a justification can be made using series, too.  On the other hand,
although our series proof of Theorem \ref{MONPmaxVector} below is not as
short as that of Brown and Douglas, it elucidates the consideration of
maximizing vectors $x_0$ (see \eqref{inequalityForCks} below).

\begin{proof}[Proof of Theorem \ref{MONPmaxVector}]
			Let $R>0$ be such that $D(z_0;R)\subseteq\Omega$.  Then $F(z)$ admits a 
			power series representation on $D(z_0;R)$, say
			\begin{equation}\label{powerSeries}
						F(z)=\sum_{k=0}^{\infty}C_k(z-z_0)^{k}, 
			\end{equation}
			where $C_k\in\M_n$ for $k\geq0$.  For any vector $x$, 
			\[	\|F(z)x\|^2=\sum_{j,k\geq0}
					(z-z_0)^{j}(\overline{z-z_0})^{k}\langle C_j x,C_k x\rangle	\]
			by continuity of the inner product and so
			\begin{equation}\label{meanvalueIdentity}
						\frac{1}{2\pi}\int_{0}^{2\pi}\|F(z_0+re^{it})x\|^2\,dt
						=\sum_{k=0}^{\infty}\|C_k x\|^{2}r^{2k}
			\end{equation}
			for any $r\in(0,R)$.
			
			Now, since $\|F(z)\|\leq \|F(z_0)\|=\|C_0\|$ for all $z\in\Omega$, 
			it follows from \eqref{meanvalueIdentity} that
			\begin{equation}\label{inequalityForCks}
						\sum_{k=0}^{\infty}\|C_k x\|^{2}r^{2k}
						=\frac{1}{2\pi}\int_{0}^{2\pi}\|F(z_0+re^{it})x\|^2\,dt
						\leq\|C_0\|^2\|x\|^2
			\end{equation}
			for any vector $x$ and $r\in(0,R)$.  Let $x_0$ be a maximizing vector 
			for $C_0$.  Replace $x$ by $x_0$ in \eqref{inequalityForCks}, and conclude
			\[	\|C_0\|^2+\sum_{k=1}^{\infty}\|C_k x_0\|^{2}r^{2k}
					\leq \|C_0\|^2\|x_0\|^2=\|C_0\|^2,	\]
			and $F^{(k)}(z_0)x_0=C_k x_0=0$ for every $k\geq 1$.
			In particular, by \eqref{powerSeries}, $F(z)x_0=C_0x_0=F(z_0)x_0$ for all 
			$z\in D(z_0;R)$ and so, by the identity theorem (e.g., \cite[Theorem 10.18]{R}), 
			$F(z)x_0=F(z_0)x_0$ for all $z\in\Omega$.
\end{proof}

\begin{remark}
			Note that \emph{the conclusion of Theorem \ref{MONPmaxVector} alone} implies 
			that $\|F(z)\|$ has a \emph{minimum} at $z_0$; after all, if $z\mapsto F(z)x_0$ 
			is constant on $\Omega$ for some maximizing vector $x_0$ of $F(z_0)$, then
			\[	\|F(z_0)\|=\|F(z_0)x_0\|=\|F(z)x_0\|\leq\|F(z)\|\;
					\text{ for all }z\in\Omega. 	\]
			Hence, the conclusion of Theorem \ref{MONPmaxVector} is stronger than that 
			of the maximum norm principle (when using the operator norm) because it implies 
			that any maximizing vector $x_0$ for $F(z_0)$ is also a maximizing vector for 
			$F(z)$, and $F(z)$ has constant norm equal to that of $F(z_0)$ for all $z\in\Omega$.
\end{remark}

The observation made in the remark leads one to the following factorization. 

\begin{theorem}\label{MONPmaxVectorFactorization}
			Let $\Omega$ be a region of $\C$ and let $F:\Omega\to\M_n$ be analytic.  
			If there is a $z_0\in\Omega$ so that $\|F(z)\|\leq \|F(z_0)\|$ for all 
			$z\in\Omega$, then there are $n\times n$  (constant) unitary\footnote{Recall 
			that $A\in\M_{n}$ is said to be \textbf{unitary} if $A^{*}A=AA^{*}=I$.} matrices 
			$U$ and $V$, and an analytic function $G:\Omega\to\M_{n-1}$, such that
			\begin{equation}\label{svdFzWeak}
						F(z)=U\left[
						\begin{array}{cc}
									\|F(z_0)\|	&	0\\
									0						&	G(z)
						\end{array}\right]V.
			\end{equation}
\end{theorem}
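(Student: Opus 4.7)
The plan is to use Theorem~\ref{MONPmaxVector} together with a short polarization argument to build orthonormal bases of $\C^n$ adapted to a top singular pair of $F(z_0)$, and then read off $G$ as the lower-right block of $F(z)$ in those bases. Set $\sigma_1 \Mydef \|F(z_0)\|$. If $\sigma_1 = 0$ then $F \equiv 0$ on $\Omega$ and the conclusion is trivial, so assume $\sigma_1 > 0$. Pick a unit maximizing vector $x_0$ for $F(z_0)$ and define $u_1 \Mydef F(z_0) x_0 / \sigma_1$, which is itself a unit vector. By Theorem~\ref{MONPmaxVector}, $F(z) x_0 = F(z_0) x_0 = \sigma_1 u_1$ for every $z \in \Omega$, so the image of $x_0$ under $F$ is constant and points along $u_1$.

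The crucial step is to show that $u_1 \perp F(z) w$ for every $z \in \Omega$ and every $w \in \C^n$ with $w \perp x_0$. Fix such a unit $w$, set $c(z) \Mydef \langle F(z) w, u_1 \rangle$, and for $\alpha \in \C$ let $v_\alpha \Mydef x_0 + \alpha w$, so that $\|v_\alpha\|^2 = 1 + |\alpha|^2$. Using $F(z) x_0 = \sigma_1 u_1$, a direct expansion yields
\[
\|F(z) v_\alpha\|^2 = \sigma_1^2 + 2\sigma_1 \re\bigl(\alpha\, c(z)\bigr) + |\alpha|^2 \|F(z) w\|^2,
\]
while the hypothesis $\|F(z)\| \leq \sigma_1$ together with $\|F(z) v_\alpha\| \leq \|F(z)\| \cdot \|v_\alpha\|$ gives the upper bound $\sigma_1^2 (1 + |\alpha|^2)$. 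Subtracting and using $\|F(z) w\|^2 \geq 0$ leaves
\[
2 \sigma_1 \re\bigl(\alpha\, c(z)\bigr) \leq |\alpha|^2 \sigma_1^2.
\]
Choosing $\alpha = t\, \overline{c(z)}$ with $t > 0$ small and letting $t \to 0^+$ forces $c(z) = 0$, as desired.

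With the orthogonality $\langle F(z) w, u_1 \rangle = 0$ in hand, extend $x_0$ to an orthonormal basis $v_1 = x_0, v_2, \ldots, v_n$ of $\C^n$ and $u_1$ to an orthonormal basis $u_1, u_2, \ldots, u_n$. Let $U$ be the unitary matrix with columns $u_1, \ldots, u_n$ and set $V \Mydef [v_1 \ v_2 \ \cdots \ v_n]^{*}$, also unitary; both are constant in $z$. The entries of $U^{*} F(z) V^{*}$ are $\langle F(z) v_j, u_i \rangle$, so the first column is the constant vector $(\sigma_1, 0, \ldots, 0)^T$ (by Theorem~\ref{MONPmaxVector}), the remainder of the first row is identically zero (by the previous step), and the lower-right $(n-1) \times (n-1)$ block may be defined as $G(z)$. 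Since $U, V$ are constant and $F$ is analytic, $G$ is analytic on $\Omega$, yielding \eqref{svdFzWeak}.

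The main obstacle is the orthogonality in the second paragraph; the rest is linear algebra. That step is where the global maximum hypothesis on $\|F(z)\|$ is used (to bound $\|F(z) v_\alpha\|$ everywhere), not just its value at $z_0$, and the polarization trick with complex $\alpha$ is what extracts vanishing of $c(z)$ from an estimate whose right-hand side is only $O(|\alpha|^2)$.
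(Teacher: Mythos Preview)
Your proof is correct and follows the same overall scaffold as the paper's: invoke Theorem~\ref{MONPmaxVector} to freeze $F(z)x_0$, extend $x_0$ and $u_1=F(z_0)x_0/\sigma_1$ to orthonormal bases, and read off $G$ as the lower-right block of $U^{*}F(z)V^{*}$. The one substantive divergence is in how you kill the off-diagonal pieces of the first row. The paper observes that the $(1,1)$ entry of $U^{*}F(z)V^{*}$ equals $\|F(z_0)\|$ while $\|U^{*}F(z)V^{*}\|=\|F(z)\|\le\|F(z_0)\|$, and then uses the elementary fact that the operator norm dominates every row and column Euclidean norm to force the remaining entries of the first row and column to vanish. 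Your polarization argument---expanding $\|F(z)(x_0+\alpha w)\|^{2}$ and exploiting that the cross term is $O(|\alpha|)$ while the slack is $O(|\alpha|^{2})$---reaches the same conclusion by a direct variational computation. The paper's route is shorter and purely linear-algebraic; yours is slightly longer but self-contained and makes more transparent exactly where the bound $\|F(z)\|\le\sigma_1$ enters.
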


Roughly, in the case of $2\times 2$ matrices, Theorem \ref{MONPmaxVectorFactorization} 
states that when $F(z)$ is nonconstant, analytic, and achieves its maximum operator norm, 
say equal to $1$, at a point of a region, then there is a nonconstant analytic function 
$g:\Omega\to\D$ such that
\[	F(z)=\left[
		\begin{array}{cc}
					1	&	0\\
					0	&	g(z)
		\end{array}\right]	\]
up to multiplication by (constant) unitary matrices on the right and the left.
Hence, in a sense, the example given in \eqref{firstEx} is essentially the only
example of a nonconstant $2\times 2$ matrix function whose operator norm
achieves a maximum value of $1$.

\begin{proof}[Proof of Theorem \ref{MONPmaxVectorFactorization}]
			Without loss of generality, we assume $\|F(z_0)\|=1$.  By Theorem 
			\ref{MONPmaxVector}, if $x_0$ is a maximizing vector for $F(z_0)$, 
			then the vector function $z\mapsto F(z)x_0$ is constant on $\Omega$.  
			Recalling that $\|v\|_{\C^{n}}^2=v^{*}v$ for any $v\in\C^{n}$ and
			choosing $y_0=F(z_0)x_0$, we obtain
			\[	\|y_0\|^2=\|x_0\|^2=1\;\text{ and }\;
					y_0^{*}F(z)x_0=y_0^{*}F(z_0)x_0=1\;\text{ for all }z\in\Omega.	\]
			Let $X_0$ and $Y_0$ be (constant) $n\times n$ unitary matrices whose 
			first columns are $x_0$ and $y_0$, respectively.  Then, in matrix blocks,
			\[	Y_{0}^{*}F(z)X_{0}=\left[
					\begin{array}{cc}
								a_{1,1}(z)	&	a_{1,2}(z)\\
								a_{2,1}(z)	&	a_{2,2}(z)
					\end{array}\right],	\]
			where $a_{1,1}(z)=y_0^{*}F(z)x_0=1$.  Furthermore, as $X_0$ and $Y_0$ 
			are unitary, $\|Y_{0}^{*}F(z)X_{0}\|=\|F(z)\|=1$ (or, alternatively, 
			this follows by the remark following the proof of Theorem \ref{MONPmaxVector}).  
			This implies that
			\[	a_{1,2}(z)=0\;\text{ and }\;a_{2,1}(z)=0\;\text{ for all }z\in\Omega	\]
			because the operator norm of an $n\times n$ matrix is an upper bound on 
			the Euclidean (vector) norm of its columns and rows.  In other words, 
			the assumptions on $F(z)$ imply the existence of $n\times n$ constant 
			unitary matrices $X_0$ and $Y_0$ so that
			\[	F(z)=Y_{0}\left[
					\begin{array}{cc}
								1	&	0\\
								0	&	a_{2,2}(z)
					\end{array}\right]X_0^{*}	\]
			where $a_{2,2}(z)$ is an analytic $(n-1)\times(n-1)$ matrix-valued 
			function.  Thus, the desired conclusion follows with $U=Y_0$, 
			$V=X_0^{*}$, and $G(z)=a_{2,2}(z)$.
\end{proof}

\section{Maximum singular value principles.}\label{sectionMaxSkPrinciples}

An attractive feature of Theorem \ref{MONPmaxVectorFactorization} is that it
lends itself to iteration.  Indeed, the lower right block $G(z)$ in \eqref{svdFzWeak}
may very well satisfy the assumptions of Theorem \ref{MONPmaxVectorFactorization}
just as $F(z)$ did.  In this section, we explore this situation and its consequences,
but first review some basic terminology and results concerning singular values.

We begin with the observation that the maximizing vectors for a matrix $A$ admit the 
characterization that $x_0$ is a maximizing vector for $A\in\M_n$ if and 
only if $x_0$ has norm $1$ and $A^{*}Ax_0=\|A\|^2x_0$.  More generally, for a 
vector $x$ (whether it has norm $1$ or not),
\begin{equation}\label{eigenvectorForModulus}
			\|Ax\|=\|A\|\cdot\|x\|\;\text{ if and only if }\;
			A^{*}Ax=\|A\|^2 x.
\end{equation}
A proof of \eqref{eigenvectorForModulus} can be based on the fact that every 
positive semi-definite matrix has a unique positive semi-definite square root 
(e.g., see \cite[Theorem 7.2.6]{HJ}).  To that end, first note that the inequality 
$\|Av\|\leq\|A\|\cdot\|v\|$ valid for all vectors $v$ is equivalent to stating 
that the matrix $\|A\|^2 I-A^{*}A$ is positive semi-definite.  So, 
$\|Ax\|=\|A\|\cdot\|x\|$ holds if and only if $\|(\|A\|^2 I-A^{*}A)^{1/2}x\|=0$, 
or equivalently, $(\|A\|^2 I-A^{*}A)x=0$.  Hence, $x_0$ is a maximizing vector 
of $A$ if and only if it is an eigenvector of $A^*A$ of norm $1$, i.e., 
\eqref{eigenvectorForModulus} holds.

The role played in Theorem \ref{MONPmaxVector} by maximizing vectors for a 
matrix and their alternative characterization as eigenvectors lead directly 
to the consideration of singular values.

Recall that the singular values $s_{k}(A)$, $1\leq k\leq n$, of an $n\times n$ 
matrix $A$ are the nonnegative square roots of the eigenvalues of $A^{*}A$ 
ordered in the nonincreasing order, that is, 
\[	s_{1}(A)\geq s_{2}(A)\geq\cdots\geq s_{n}(A).	\]
In particular, $s_{1}(A)=\|A\|$ (see \eqref{eigenvectorForModulus}) and 
$s_{1}^{2}(A)+s_{2}^2(A)+\cdots+s_{n}^2(A)=\|A\|_{\cF}^{2}$.  The latter 
can be deduced using any singular value decomposition (SVD) of $A$ (e.g.,
\cite[Theorem 7.35]{HJ}) and \eqref{FrobeniusNormDef}.

The following result is a simple consequence of Theorem \ref{MONPmaxVectorFactorization}.

\begin{theorem}\label{singValueMaxPrincipleThm}
			Let $\Omega$ be a region of $\C$ and let 
			$F:\Omega\to\M_n$ be analytic.  Suppose that, for each $k=1,\ldots, n$, 
			the function $z\mapsto s_{k}(F(z))$ attains its maximum value on $\Omega$.
			Then $F(z)$ is constant on $\Omega$.
\end{theorem}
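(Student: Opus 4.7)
The plan is to proceed by induction on $n$, iterating Theorem \ref{MONPmaxVectorFactorization} to peel off the largest singular value at each stage. The base case $n = 1$ is immediate: here $F$ is scalar-valued and $s_1(F(z)) = |F(z)|$ attains a maximum on $\Omega$, so the classical MMP forces $F$ to be constant.

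For the inductive step, suppose the result holds for analytic functions into $\M_{n-1}$, and let $F : \Omega \to \M_n$ satisfy the hypotheses. Since $\|F(z)\| = s_1(F(z))$ attains its maximum, Theorem \ref{MONPmaxVectorFactorization} yields constant unitaries $U, V \in \M_n$ and an analytic $G : \Omega \to \M_{n-1}$ with
\[
F(z) = U\left[\begin{array}{cc} M & 0 \\ 0 & G(z) \end{array}\right] V,
\]
where $M = \max_{z \in \Omega}\|F(z)\|$. The aim is then to verify that $G$ satisfies the hypotheses of the theorem on $\M_{n-1}$; the inductive hypothesis will force $G$ to be constant, and hence $F$ to be constant as well.

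The crucial step is the identification $s_{k+1}(F(z)) = s_k(G(z))$ for $k = 1, \ldots, n-1$. To establish this, I would invoke two standard facts about singular values: multiplication by constant unitaries does not alter them, and the singular values of a block diagonal matrix are the multiset union of the singular values of its diagonal blocks. Thus the multiset $\{s_k(F(z))\}_{k=1}^n$ equals $\{M\}$ together with $\{s_k(G(z))\}_{k=1}^{n-1}$. Since $\|G(z)\| \leq \|F(z)\| \leq M$, the largest singular value $s_1(G(z))$ never exceeds $M$; sorting in nonincreasing order then yields $s_1(F(z)) = M$ and the claimed shifted equalities. Consequently each $s_k \circ G$ attains its maximum on $\Omega$ (at the same points where $s_{k+1} \circ F$ does), and the inductive hypothesis closes the argument.

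The main obstacle is purely one of bookkeeping: once the indexing identity $s_{k+1}(F(z)) = s_k(G(z))$ is in hand, the induction proceeds cleanly, and no new analytic input beyond Theorem \ref{MONPmaxVectorFactorization} and the unitary invariance of singular values is required.
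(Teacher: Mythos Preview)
Your proof is correct and follows essentially the same route as the paper: induction on $n$, with the base case handled by the scalar MMP and the inductive step handled by applying Theorem~\ref{MONPmaxVectorFactorization} and then observing that $s_{k+1}(F(z))=s_k(G(z))$. The paper asserts this last identity without further comment, whereas you supply a brief justification via unitary invariance and the block-diagonal structure; otherwise the arguments are the same.
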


In Theorem \ref{singValueMaxPrincipleThm}, the assumption does \emph{not} require 
that the functions $s_{1}(F(z))$,$\,\ldots\,$, $s_{n}(F(z))$ attain their maximum
values at the same point\footnote{In fact, if the functions 
$s_{1}(F(z))$,$\,\ldots\,$,$s_{n}(F(z))$ attain their maximum values at the same point 
$z_0\in\Omega$, it follows already from Theorem \ref{MFNP} that $F(z)$ must be
constant on $\Omega$.} of $\Omega$; they may assume their respective maxima at 
distinct points $z_1$,$\,\ldots\,$,$z_n\in\Omega$.

\begin{proof}[Proof of Theorem \ref{singValueMaxPrincipleThm}]
			Our proof is by induction on $n$.  When $n=1$, the desired conclusion 
			holds by the MMP.  So, suppose that the result holds for $n=1,\ldots,m-1$ 
			with $m\in\N$.  We now show that it also holds for $n=m$.  
			
			Suppose $F:\Omega\to\M_m$ is analytic, and the function $z\mapsto 
			s_{k}(F(z))$ attains its maximum value on $\Omega$ for each $k=1,\ldots, m$.  
			Let $z_1\in\Omega$ be such that $s_{1}(F(z))\leq s_{1}(F(z_1))$ for all 
			$z\in\Omega$. By Theorem \ref{MONPmaxVectorFactorization}, there are $m\times m$  
			(constant) unitary matrices $U_1$ and $V_1$, and an analytic function 
			$F_1:\Omega\to\M_{n-1}$, such that
			\begin{equation}\label{svdFzWeak}
						F(z)=U_1\left[
						\begin{array}{cc}
									s_{1}(F(z_1))	&	0\\
									0							&	F_1(z)
						\end{array}\right]V_1.
			\end{equation}
			In particular, $s_{k}(F_{1}(z))=s_{k+1}(F(z))$ attains its maximum 
			value on $\Omega$ for each $k=1,\ldots,m-1$.  By the inductive 
			hypothesis, $F_{1}(z)$ must be constant on $\Omega$ and, consequently, 
			$F(z)$ is also constant.
\end{proof}

At first sight, the assumption in Theorem \ref{singValueMaxPrincipleThm} that every 
function $z\mapsto s_{k}(F(z))$ attains its maximum value on $\Omega$ for $k=1,\ldots,n$ 
appears to be different from saying that $\|F(z)\|_{\cF}$ attains its maximum value on 
$\Omega$ in the maximum Frobenius norm principle above.  Based upon the results above 
one may conclude that they are in fact equivalent!

\begin{corollary}\label{constantFThm}
			Let $\Omega$ be a region of $\C$.  The following
			statements are equivalent for an analytic function $F:\Omega\to\M_n$.
			\begin{enumerate}
						\item\label{Fconstant}			
									$F(z)$ is constant on $\Omega$.
						\item\label{constantSk}
									For every $k=1,\ldots, n$, $s_{k}(F(z))$ is constant on $\Omega$.
						\item\label{maxedSk}	
									For every $k=1,\ldots, n$, $s_{k}(F(z))$ attains its maximum value 
									at some $z_k\in\Omega$.
						\item\label{normFconstant}
									$\|F(z)\|_{\cF}$ is constant on $\Omega$.
						\item\label{normFmaxAttained}
									$\|F(z)\|_{\cF}$ attains its maximum value at some $z_0\in\Omega$.
			\end{enumerate}
\end{corollary}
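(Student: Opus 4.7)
The plan is to prove the chain of implications
\[
(1)\Rightarrow(2)\Rightarrow(3)\Rightarrow(1)\Rightarrow(4)\Rightarrow(5)\Rightarrow(1),
\]
which establishes the equivalence of all five statements. Most of the arrows are immediate, and the two non-trivial ones are already in hand from earlier in the paper.

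First I would dispatch the easy implications. If $F(z)\equiv F(z_0)$ on $\Omega$, then every singular value $s_k(F(z))$ and the Frobenius norm $\|F(z)\|_{\cF}$ are constant functions of $z$, so $(1)\Rightarrow(2)$ and $(1)\Rightarrow(4)$ are immediate. A function that is constant on $\Omega$ clearly attains its maximum, so $(2)\Rightarrow(3)$ and $(4)\Rightarrow(5)$ are also immediate.

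The remaining implications are $(3)\Rightarrow(1)$ and $(5)\Rightarrow(1)$, and these are precisely the substantive results already proved. For $(3)\Rightarrow(1)$, I would simply cite Theorem \ref{singValueMaxPrincipleThm}, whose hypothesis is exactly statement (3) and whose conclusion is statement (1). For $(5)\Rightarrow(1)$, I would cite Theorem \ref{MFNP}, which states that if $\|F(z)\|_{\cF}$ attains its maximum at some $z_0\in\Omega$, then $F(z)=F(z_0)$ on $\Omega$.

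There is no real obstacle here: the corollary is essentially a repackaging of Theorems \ref{MFNP} and \ref{singValueMaxPrincipleThm} together with the elementary observation that constant functions have constant singular values and Frobenius norm. The only point worth flagging for the reader is that statement (3) is, on its face, apparently weaker than statement (2) because the maxima of the $s_k(F(z))$ are permitted to be attained at distinct points $z_1,\ldots,z_n\in\Omega$; the content of the equivalence lies precisely in the fact that Theorem \ref{singValueMaxPrincipleThm} handles this more general situation, so the implication $(3)\Rightarrow(1)$ can be invoked without further work.
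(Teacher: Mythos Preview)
Your proposal is correct and matches the paper's own proof essentially line for line: the trivial implications $(1)\Rightarrow(2)\Rightarrow(3)$ and $(1)\Rightarrow(4)\Rightarrow(5)$ are noted as evident, and the two nontrivial arrows $(3)\Rightarrow(1)$ and $(5)\Rightarrow(1)$ are handled by citing Theorem~\ref{singValueMaxPrincipleThm} and Theorem~\ref{MFNP}, respectively. There is nothing to add or change.
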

\begin{proof}
			It is evident that $(\ref{Fconstant})\implies(\ref{constantSk})$,
			$(\ref{constantSk})\implies(\ref{maxedSk})$, 
			$(\ref{Fconstant})\implies(\ref{normFconstant})$, and
			$(\ref{normFconstant})\implies(\ref{normFmaxAttained})$.			
			The only nontrivial implications 
			$(\ref{normFmaxAttained})\implies(\ref{Fconstant})$ and 
			$(\ref{maxedSk})\implies(\ref{Fconstant})$
			are consequences of the maximum Frobenius norm principle 
			and Theorem \ref{singValueMaxPrincipleThm}, respectively.
\end{proof}

In view of Corollary \ref{constantFThm} (or Theorem \ref{singValueMaxPrincipleThm}),
if $\Omega$ is region of $\C$ and $F:\Omega\to\M_n$ is a \emph{nonconstant} analytic 
function such that $s_1(F(z))$ attains its maximum on $\Omega$, then there is a largest 
integer $r<n$ such that the functions $s_{1}(F(z))$, $\ldots\,$, $s_{r}(F(z))$ attain their 
maximum values on $\Omega$.  In this case, up to multiplication by (constant) unitary 
matrices on the right and the left, $F(z)$ has the block form
\[	\left[
		\begin{array}{cccccc}
					s_{1}(F(z_1))	&	\ldots	&	0								&	0\\
					\vdots				&	\ddots	&	\vdots					&	\vdots\\
					0							& \ldots	&	s_{r}(F(z_{r}))	&	0\\
					0							&	\dots		&	0								&	F_{r}(z)
		\end{array}\right]	\]
for some (necessarily nonconstant) analytic function $F_{r}:\Omega\to\M_{n-r}$.

A closer look at the proof of Theorem \ref{singValueMaxPrincipleThm} also reveals 
the following refinement of the maximum norm principle. We omit the details.

\begin{corollary}\label{MaxNormRefinement}
			Let $1\leq m\leq n$, let $\Omega$ be a region of $\C$,
			and let $F:\Omega\to\M_n$ be analytic.  Suppose that, for each $k=1,\ldots, m$, 
			the function $s_{k}(F(z))$ attains its maximum value on $\Omega$.  Then 
			$s_{k}(F(z))$ is constant on $\Omega$ for each $k=1,\ldots,m$.
\end{corollary}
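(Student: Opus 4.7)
The plan is to follow the inductive scheme used in the proof of Theorem \ref{singValueMaxPrincipleThm}, but to halt the peeling after $m$ steps rather than continuing all the way down. Accordingly, I argue by induction on $m$, for arbitrary $n \geq m$. The base case $m=1$ asserts that whenever $s_1(F(z)) = \|F(z)\|$ attains a maximum on $\Omega$, the function $\|F(z)\|$ is constant; this is precisely Theorem \ref{MNP} applied to the operator norm, or alternatively the content of the remark after Theorem \ref{MONPmaxVector}.

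For the inductive step, assume the conclusion for $m-1$ (and every admissible $n$), and suppose $s_k(F(z))$ attains its maximum on $\Omega$ for $k = 1, \ldots, m$. Pick $z_1 \in \Omega$ realizing the maximum of $s_1(F(z))$ and apply Theorem \ref{MONPmaxVectorFactorization} to write $F(z) = U_1 \, \mathrm{diag}(s_1(F(z_1)), F_1(z)) \, V_1$, for constant unitaries $U_1, V_1 \in \M_n$ and an analytic function $F_1 : \Omega \to \M_{n-1}$. The singular values of $F(z)$ are then the nonincreasing rearrangement of the multiset consisting of $s_1(F(z_1))$ together with the singular values of $F_1(z)$. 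The critical observation is that $s_j(F_1(z)) \leq s_1(F(z)) \leq s_1(F(z_1))$ for every $j$ and every $z \in \Omega$, so the constant $s_1(F(z_1))$ is already the largest entry of this multiset; consequently no reshuffling occurs, and one obtains $s_1(F(z)) \equiv s_1(F(z_1))$ together with the identification $s_{k+1}(F(z)) = s_k(F_1(z))$ for $k = 1, \ldots, n-1$.

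With this identification in hand, each of $s_1(F_1(z)), \ldots, s_{m-1}(F_1(z))$ attains its maximum on $\Omega$, so the inductive hypothesis applied to $F_1$ forces each of them to be constant on $\Omega$. Translating back yields that $s_2(F(z)), \ldots, s_m(F(z))$ are constant on $\Omega$, which combined with the constancy of $s_1(F(z))$ noted above closes the induction. The only place requiring real care is the bookkeeping of singular values across the block factorization: the hypothesis $\|F(z)\| \leq \|F(z_1)\|$ is precisely what guarantees that the corner entry $s_1(F(z_1))$ dominates throughout $\Omega$, so that the singular-value sequences of $F$ and $F_1$ align cleanly rather than interleaving in a $z$-dependent fashion.
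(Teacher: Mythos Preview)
Your argument is correct and is precisely the approach the paper intends: the paper writes ``A closer look at the proof of Theorem \ref{singValueMaxPrincipleThm} also reveals the following refinement \ldots\ We omit the details,'' and your induction on $m$ via Theorem \ref{MONPmaxVectorFactorization} is exactly that closer look. If anything, you are more careful than the paper, which simply asserts $s_{k}(F_{1}(z))=s_{k+1}(F(z))$ in the proof of Theorem \ref{singValueMaxPrincipleThm} without spelling out the domination argument you provide.
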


Note that for an arbitrary $F(z)$, it may happen that $s_{n}(F(z))$ is constant
while $s_{k}(F(z))$ is not when $1\leq k<n$.  For example, the function
$F:\D\setminus\{0\}\to\M_2$ defined by
\[	F(z)=\left[
		\begin{array}{cc}
					1	&	0\\
					0	&	z^{-1}
		\end{array}\right],	\]
has $s_1(F(z))=|z|^{-1}$ and $s_{2}(F(z))=1$ for all 
$z\in\D\setminus\{0\}$.

As seen in its proof, the key to obtaining the conclusion of Theorem 
\ref{MONPmaxVectorFactorization} relies on choosing a maximizing vector
for $F(z_0)$.  The following theorem is a refinement of Theorem 
\ref{MONPmaxVectorFactorization} that relies on choosing instead 
``all maximizing vectors'' for $F(z_0)$.

\begin{theorem}\label{factoringFzThm}
			Let $\Omega$ be a region of $\C$ and let 
			$F:\Omega\to\M_n$ be analytic.  Suppose there is a $z_0\in\Omega$ 
			so that $\|F(z)\|\leq \|F(z_0)\|$ for all $z\in\Omega$ and 
			set\footnote{Equivalently, $d$ is the dimension of the subspace 
			spanned by the ``right-singular vectors'' associated with the largest 
			singular value of $F(z_0)$.} 
			\begin{equation}\label{rDimEq}
						d=\dim\{x\in\C^{n}: \|F(z_0)x\|=\|F(z_0)\|\cdot\|x\|\}.
			\end{equation}
			Then there are $n\times n$ unitary matrices $U$ and $V$ such that
			\[	F(z)=\|F(z_0)\|U\cdot V\;\text{ when }d=n,	\]
			or, for some analytic function $R:\Omega\to\M_{n-d}$,
			\begin{equation}\label{svdFz}
						F(z)=U\left[
						\begin{array}{cc}
									\|F(z_0)\|\cdot I_{d}	&	0\\
									0											&	R(z)
						\end{array}\right]V\;\text{ when }d<n.
			\end{equation}
			In particular, $z\mapsto F(z)x$ is constant and $F^{(k)}(z_0)x=0$ 
			for all $k\geq 1$ when $x$ satisfies $\|F(z_0)x\|=\|F(z_0)\|\cdot\|x\|$.
\end{theorem}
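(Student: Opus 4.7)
My plan is to bootstrap Theorem~\ref{MONPmaxVectorFactorization} from a single maximizing vector to an orthonormal basis of the entire maximizing subspace
\[
E \Mydef \{x\in\C^n : \|F(z_0)x\| = \|F(z_0)\|\cdot\|x\|\}.
\]
By \eqref{eigenvectorForModulus}, $E$ coincides with $\ker(\|F(z_0)\|^2 I - F(z_0)^{*}F(z_0))$ and so is a genuine linear subspace, with $\dim E = d$. This already disposes of the theorem's final assertion: once I know that $z\mapsto F(z)x$ is constant and $F^{(k)}(z_0)x = 0$ for each vector in some basis of $E$ (which is exactly what Theorem~\ref{MONPmaxVector} gives), the same conclusions will hold throughout $E$ by linearity.

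Setting $M = \|F(z_0)\|$, I would fix an orthonormal basis $x_1,\ldots,x_d$ of $E$ and apply Theorem~\ref{MONPmaxVector} to each $x_j$, so that the vectors $y_j \Mydef F(z_0)x_j = F(z)x_j$ do not depend on $z\in\Omega$. A short polarization computation using
$\langle F(z_0)x_i,F(z_0)x_j\rangle = \langle F(z_0)^{*}F(z_0)x_i,x_j\rangle = M^2\langle x_i,x_j\rangle$
then shows that $\{y_1/M,\ldots,y_d/M\}$ is orthonormal in $\C^n$. I would extend both $\{x_j\}$ and $\{y_j/M\}$ to orthonormal bases of $\C^n$, assemble them as the columns of unitary matrices $X_0$ and $Y_0$ respectively, and examine $Y_0^{*}F(z)X_0$. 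For $j\leq d$ its $j$-th column equals $Y_0^{*}y_j$, and a direct inner-product computation in the chosen bases shows that the top-left $d\times d$ block of $Y_0^{*}F(z)X_0$ is the constant matrix $MI_d$ while the bottom-left $(n-d)\times d$ block vanishes identically on $\Omega$.

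The remaining task is to eliminate the top-right $d\times(n-d)$ block. For this I would use that $\|Y_0^{*}F(z)X_0\| = \|F(z)\|\leq M$ (since $X_0$ and $Y_0$ are unitary), whence every row of $Y_0^{*}F(z)X_0$ has Euclidean norm at most $M$. For each row index $i\leq d$, the first $d$ entries of that row already form the vector $Me_i^{\top}$ of norm exactly $M$, forcing the remaining $n-d$ entries to vanish. This yields the block decomposition \eqref{svdFz} with $U=Y_0$, $V=X_0^{*}$, and $R(z)$ the bottom-right $(n-d)\times(n-d)$ block of $Y_0^{*}F(z)X_0$, which inherits analyticity from $F$ because $X_0$ and $Y_0$ are constant. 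When $d=n$ the same argument collapses to $Y_0^{*}F(z)X_0 = MI_n$, giving $F(z) = M\,UV$.

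The main subtle point — rather than a genuine obstacle — is the last step: to kill the top-right block I must invoke the \emph{row}-norm bound coming from $\|Y_0^{*}F(z)X_0\|\leq M$, since the column-norm bound on the first $d$ columns is already saturated by the constant top-left block and yields no new information. Everything else is bookkeeping around Theorem~\ref{MONPmaxVectorFactorization}.
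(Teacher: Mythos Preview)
Your argument is correct and follows essentially the same path as the paper's: pick unitaries adapted to the maximizing subspace of $F(z_0)$, then use the operator-norm bound on the rows and columns of the conjugated function to isolate the constant $M I_d$ block. The only cosmetic difference is that the paper obtains $U$ and $V$ directly from an SVD of $F(z_0)$ and then applies the scalar MMP to the diagonal entries $G_{i,i}(z)$, whereas you build $X_0,Y_0$ by hand from an orthonormal basis of $E$ and invoke Theorem~\ref{MONPmaxVector} columnwise instead.
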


Note that one could apply Theorem \ref{factoringFzThm} again to the lower-right 
matrix-block function $R(z)$ appearing in \eqref{svdFz}.  More definitively, if 
$s_{1}(F(z))$ attains its maximum at $z_1\in\Omega$, then $d_1$ is the largest 
integer such that 
\[	s_{1}(F(z_1))=s_{d_1}(F(z_1)),	\] 
$s_{d_1+1}(F(z))$ attains its maximum at $z_2\in\Omega$, and $d_2$ is the largest 
integer such that 
\[	s_{d_1+1}(F(z_2))=s_{d_2}(F(z_2)),	\]
then up to multiplication by (constant) unitary matrices on the right and the left, 
$F(z)$ has the block form
\[	\left[
		\begin{array}{ccccc}
					s_{d_1}(F(z_1))\cdot I_{d_1}	&	0	&	0\\
					0	&	s_{d_2}(F(z_2))\cdot I_{d_2}	&	0\\
					0	& 0	&		*					
		\end{array}\right].	\]
Hence, if every function $z\mapsto s_{k}(F(z))$ attains its maximum at some point
of $\Omega$ then, up to multiplication by (constant) unitary matrices on the right 
and the left, $F(z)$ admits the block form
\[	\left[
		\begin{array}{ccccc}
					s_{d_1}(F(z_1))\cdot I_{d_1}	&	0	&	\ldots	&	0\\
					0	&	s_{d_2}(F(z_2))\cdot I_{d_2}	&	\ldots	&	0\\
					\vdots	&\vdots	& \ddots	&	\vdots\\
					0	& 0	&	\ldots	&	s_{d_\kappa}(F(z_\kappa))\cdot I_{d_\kappa}					
		\end{array}\right],	\]
and is hence a constant matrix, as expected by Theorem \ref{singValueMaxPrincipleThm}.

Likewise, a completely analogous argument reveals that the refinement of the maximum 
norm principle in Corollary \ref{MaxNormRefinement} is also a consequence of Theorem 
\ref{factoringFzThm}, because $s_{j}(F(z))=\|F(z_0)\|$ for $j=1,\ldots,d$ and 
$s_{\ell+d}(F(z))=s_{\ell}(R(z))$ for $\ell=1,\ldots, (n-d)$.  We leave the 
details to the reader.

\begin{proof}[Proof of Theorem \ref{factoringFzThm}]
			Let $D_{z_0}$ be the diagonal matrix whose main diagonal entries are the 
			singular values of $F(z_0)$ listed in nonincreasing order.  Then we 
			may let $U_{z_0}$ and $V_{z_0}$ be unitary matrices such that 
			$F(z_0)=U_{z_0}D_{z_0}V_{z_0}$ (i.e., an SVD for $F(z_0)$).  Let $r$ denote 
			the largest positive integer such that $s_{r}(F(z_0))=\|F(z_0)\|$.  Note 
			that, by \eqref{eigenvectorForModulus}, a vector $x$ satisfies
			$\|F(z_0)x\|=\|F(z_0)\|\cdot\|x\|$ if and only if 
			$V_{z_0}^{*}(\|F(z_0)\|^{2}I-D_{z_0}^{2})V_{z_0}x=0$, or equivalently,			
			$x$ belongs to the linear span of first $r$ columns of $V^{*}_{z_0}$
			because $V^{*}_{z_0}$ is unitary.  Thus, $r=d$ with $d$ as in \eqref{rDimEq}.
						
			Now, consider the function $G(z)=U_{z_0}^{*}F(z)V_{z_0}^{*}$.
			Clearly, $G$ is analytic on $\Omega$ and satisfies
			\[	\|G(z)\|\leq\|F(z_0)\|\;\text{ for all }z\in\Omega.	\]			
			Since the $\C^{n}$ norm of every column (and row) of a matrix is bounded 
			by its operator norm, the modulus of every (analytic) entry $G_{i,j}(z)$ 
			is also bounded by $\|F(z_0)\|$.  Moreover, if $1\leq i\leq r$, then
			$G_{i,i}(z_0)=\|F(z_0)\|$ and so $G_{i,i}(z)=\|F(z_0)\|$ for all 
			$z\in\Omega$ by the (usual) MMP.  In particular, the first $r$ columns 
			and $r$ rows of $G(z)$ have $\C^{n}$ norm at least $\|F(z_0)\|$.
			Therefore, $G_{i,j}(z)=0$ when $i\neq j$ and $1\leq i,j\leq r$.  In other
			words, using matrix blocks, this shows that
			\[	F(z)=U_{z_0}G(z)V_{z_0}=U_{z_0}\left[
					\begin{array}{cc}
								\|F(z_0)\|\cdot I_{r}	&	0\\
								0											&	R(z)
					\end{array}\right]V_{z_0},	\]
			for some analytic function $R:\Omega\to\M_{n-r}$ when $r<n$, while
			$F(z)=\|F(z_0)\|U_{z_0}V_{z_0}$ when $r=n$.  This completes the proof 
			of \eqref{svdFz}.
			
			Finally, if $e_1,\ldots,e_n$ denotes the standard basis for $\C^{n}$
			and $k\geq 1$, then the $j$th column $V^{*}_{z_0}e_{j}$ of $V^{*}_{z_0}$ 
			satisfies $F(z)V^{*}_{z_0}e_{j}=\|F(z_0)\| U_{z_0}e_{j}$ and
			\[	F^{(k)}(z)V^{*}_{z_0}e_{j}=U_{z_0}\left[
					\begin{array}{cc}
								0\cdot I_{r}	&	0\\
								0							&	R^{(k)}(z)
					\end{array}\right]e_{k}=0\cdot e_{k}=0	\]
			for $j=1,\ldots,r$.  Thus, $z\mapsto F(z)x$ is constant and			
			$F^{(k)}(z)x=0$ whenever $x$ belongs to the linear span of 
			first $r$ columns of $V^{*}_{z_0}$, or equivalently,
			when $x$ satisfies $\|F(z_0)x\|=\|F(z_0)\|\cdot\|x\|$.
\end{proof}

\section{Minimum singular value principles.}\label{sectionMinSkPrinciples}

In the case of nonconstant scalar-valued functions, the MMP tells us that the 
\emph{minimum modulus} (of an analytic function on a region) can only be attained 
at a \emph{zero} of the function.  This conclusion is often called the \emph{minimum 
modulus principle} in complex analysis.  As a consequence of Theorem 
\ref{singValueMaxPrincipleThm}, we state and prove an analog of that minimum 
principle in the context of matrix-valued functions.

\begin{theorem}\label{MSVP}
			Let $\Omega$ be a region of $\C$ and let 
			$F:\Omega\to\M_n$ be a nonconstant analytic function.  Then no point 
			$z_0\in\Omega$ can be a minimum value for all of the functions $s_{k}(F(z))$, 
			$1\leq k\leq n$, unless $F(z_0)$ is not invertible.
\end{theorem}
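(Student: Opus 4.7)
The plan is to argue by contraposition, mirroring the classical derivation of the minimum modulus principle from the maximum modulus principle via inversion. Specifically, I will assume that $z_0\in\Omega$ is a common minimizer of $s_{1}(F(z)),\ldots,s_{n}(F(z))$ and that $F(z_0)$ is invertible, and then deduce that $F$ is constant on $\Omega$.

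First I would localize. Because $F$ is continuous and the set of invertible matrices in $\M_n$ is open, the set $\{z\in\Omega : F(z)\text{ is invertible}\}$ is open; let $\Omega_{0}$ be the connected component of this set containing $z_0$. Then $\Omega_0$ is a region, and $z\mapsto F(z)^{-1}$ is analytic on $\Omega_0$ (either from the adjugate formula with nonvanishing determinant, or directly from the definition of analyticity stated in the footnote of the introduction).

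The key analytical ingredient is the identity
\[ s_{k}(F(z)^{-1})=\frac{1}{s_{n-k+1}(F(z))} \qquad (z\in\Omega_0,\;1\leq k\leq n), \]
which follows at once from any SVD of $F(z)$ by inverting the diagonal factor and reversing the order of its entries. Since each $s_{n-k+1}(F(z))$ attains its minimum over $\Omega$ (and hence over the subset $\Omega_0$) at $z_0$, the reciprocal $s_{k}(F(z)^{-1})$ attains its maximum over $\Omega_0$ at $z_0$ for every $k=1,\ldots,n$. Theorem \ref{singValueMaxPrincipleThm} applied to the analytic function $F^{-1}:\Omega_0\to\M_n$ then forces $F(z)^{-1}$, and therefore $F(z)$, to be constant on $\Omega_0$.

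To finish, I would spread this local rigidity to all of $\Omega$. Since $\Omega_0$ is a nonempty open subset of the region $\Omega$ and $F-F(z_0)$ is an entry-wise analytic function on $\Omega$ that vanishes on $\Omega_0$, the identity theorem (applied entry-wise, as in \cite[Theorem 10.18]{R}) yields $F\equiv F(z_0)$ on $\Omega$, contradicting the hypothesis that $F$ is nonconstant. The only step that might look delicate is ensuring that Theorem \ref{singValueMaxPrincipleThm} is available on $\Omega_0$ rather than on the original $\Omega$, but this is handled automatically once one notes that $\Omega_0$ is itself a region and that minimality of $s_{n-k+1}(F(z_0))$ over $\Omega$ implies minimality over any subset containing $z_0$; no genuine obstacle remains beyond bookkeeping the singular-value reciprocation correctly.
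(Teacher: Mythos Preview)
Your proof is correct and follows essentially the same route as the paper's: argue by contraposition, pass to the analytic inverse $F^{-1}$ on a region containing $z_0$, convert the common minimum of the $s_{n-k+1}(F(z))$ into a common maximum of the $s_k(F^{-1}(z))$ via the reciprocation identity, invoke Theorem~\ref{singValueMaxPrincipleThm}, and finish with the identity theorem. The only cosmetic difference is that you take $\Omega_0$ to be the full connected component of the invertibility set while the paper settles for an unspecified neighborhood; either choice works.
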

\begin{proof}
			We prove that if there is a $z_0\in\Omega$ such that $F(z_0)$ is invertible 
			and the functions $z\mapsto s_k(F(z))$ attain their minimum at $z_0$ for 
			$k=1,\ldots,n$, then $F(z)$ must be a constant function.
			
			To begin, recall that the collection of invertible matrices is open.  This 
			implies $F(z)$ must be invertible for all $z$ sufficiently close to $z_0$. 
			So, $G(z)\Mydef F^{-1}(z)$ exists in some neighborhood $\Omega_0$ of $z_0$, 
			$\det F(z)$ is nonzero and analytic on $\Omega_0$, and the adjugate (or 
			transpose of the cofactor matrix) $\adj(F(z))$ of $F(z)$ is analytic on 
			$\Omega_0$.  Thus, $G(z)=F(z)^{-1}=\det^{-1}(F(z))\adj(F(z))$ is analytic 
			on $\Omega_0$ as well.
						
			By the singular value decomposition, at each $z\in\Omega_0$, the singular 
			values of $G(z)$ are the reciprocals of those of $F(z)$; more specifically,
			\[	s_{k}(G(z))=1/s_{n-k+1}(F(z))\;\text{ for }k=1,\ldots,n,
					\text{ and }z\in\Omega_0.	\]
			Therefore, the assumption of the theorem is equivalent to stating that 
			the functions $z\mapsto s_{k}(G(z))$ attain a maximum on $\Omega_0$ at 
			$z_0$. By Theorem \ref{singValueMaxPrincipleThm}, $G(z)$ and $F(z)$ must 
			be constant on $\Omega_0$.  Finally, applying the identity theorem (e.g.,
			\cite[Theorem 10.18]{R}) to each entry of $F(z)$ implies that $F(z)$ is 
			constant throughout $\Omega$, as desired.
\end{proof}

\begin{corollary}\label{detCondMSVP}
			Let $\Omega$ be a region of $\C$ and let 
			$F:\Omega\to\M_n$ be a nonconstant analytic function.  If every 
			function $s_{k}(F(z))$, $1\leq k\leq n$, attains a minimum value 
			at $z_0\in\Omega$, then $\det(F(z_0))=0$.
\end{corollary}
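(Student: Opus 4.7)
The plan is to invoke Theorem \ref{MSVP} directly and translate its conclusion about non-invertibility into a statement about the determinant. Under the hypotheses of Corollary \ref{detCondMSVP}, the function $F$ is nonconstant and the point $z_0$ is a common minimizer of every $s_k(F(z))$, $1 \leq k \leq n$. Theorem \ref{MSVP} precisely rules out the case $F(z_0)$ invertible, so $F(z_0)$ must be singular.

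From there, I would simply cite the standard fact that a matrix in $\M_n$ is invertible if and only if its determinant is nonzero (equivalently, if and only if its smallest singular value $s_n$ is positive, since $|\det F(z_0)| = \prod_{k=1}^{n} s_k(F(z_0))$). Hence $\det(F(z_0)) = 0$, completing the argument.

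There is essentially no obstacle here: the proof is a one-step logical reduction of Corollary \ref{detCondMSVP} to Theorem \ref{MSVP}, followed by a textbook linear algebra identity. The only minor point worth mentioning in the write-up is that the hypotheses of the corollary are strictly stronger than those of the theorem (the corollary requires all $s_k$ to attain their minimum at the \emph{same} point $z_0$, while Theorem \ref{MSVP} already concludes non-invertibility under that condition), so no extra work is needed to match the statements.
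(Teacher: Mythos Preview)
Your proposal is correct and matches the paper's intent: the corollary is stated immediately after Theorem~\ref{MSVP} with no separate proof, and the subsequent remark (invoking $|\det A|=\prod_k s_k(A)$) makes clear it is meant as a direct restatement of ``$F(z_0)$ not invertible'' in determinant language. One minor quibble: the hypotheses of the corollary and of Theorem~\ref{MSVP} are identical, not strictly stronger---both assume a single common minimizer $z_0$---so you can drop that parenthetical remark.
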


\begin{remark}
			Notice that $s_{n}(F(z_0))=0$ if and only if $z_0$ is a zero of 
			$\det F(z)$; indeed, with an SVD of $A\in\M_n$, we see that
			\begin{equation}\label{skProduct}
						\prod_{k=1}^{n} s_k(A)=|\det(A)|.
			\end{equation}
			Thus, Corollary \ref{detCondMSVP} states that if every function 
			$s_{k}(F(z))$, $1\leq k\leq n$, attains a minimum value at 
			$z_0\in\Omega$, then $s_n(F(z_0))=0$.
\end{remark}

To illustrate Theorem \ref{MSVP}, it suffices to take $F:\D\to\M_2$ as in 
\eqref{firstEx}; indeed, the functions $s_{1}(F(z))=1$ and $s_{2}(F(z))=|g(z)|$ 
attain their respective minimum values at any zero $z_0$ of $g$ and $F(z_0)$ is 
certainly not invertible.

In light of Theorems \ref{singValueMaxPrincipleThm} and \ref{MSVP}, one may 
ask whether the singular values of a matrix-valued analytic function could
attain minimum values at \emph{distinct} points.  The following result gives
an affirmative answer.

\begin{theorem}\label{toyExThm}
			If $F:\C\to\M_2$ denotes the function defined by
			\begin{equation}\label{toyEx}
						F(z)=\left[
						\begin{array}{cc}
									1	&	z\\
									0	&	z-1
						\end{array}\right],
			\end{equation}
			then $s_1(F(z))$ has a minimum at $z_1=0$ and $s_2(F(z))$ has a minimum 
			at $z_2=1$.
\end{theorem}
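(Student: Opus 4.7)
The plan is to verify each of the two claims by a direct elementary computation, treating $s_1(F(z))$ and $s_2(F(z))$ separately rather than attempting a unified approach via the characteristic polynomial of $F(z)^*F(z)$.

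For the second singular value, I would observe that $s_2(F(z))\geq 0$ for every $z\in\C$, with equality precisely when $F(z)$ fails to be invertible. Substituting $z=1$ into \eqref{toyEx} produces a matrix whose second row vanishes, so $\det F(1)=0$ and hence $s_2(F(1))=0$. Since the function $s_2(F(z))$ is nonnegative and attains the value $0$ at $z_2=1$, this point is a global minimum for $s_2(F(z))$.

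For the first singular value, I would exploit the fact that the first column of $F(z)$ is the constant vector $e_1$, so that $F(z)e_1=e_1$ for every $z\in\C$. Combined with the identity $s_1(F(z))=\|F(z)\|$ and the definition of the operator norm, this gives
\[
s_1(F(z))=\|F(z)\|\geq\|F(z)e_1\|_{\C^{2}}=1
\]
for every $z\in\C$. At $z=0$, the matrix $F(0)$ is diagonal with entries of modulus $1$, hence unitary, so both of its singular values equal $1$; in particular $s_1(F(0))=1$, attaining the lower bound. Thus $z_1=0$ is a minimum point for $s_1(F(z))$.

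I do not anticipate a genuine obstacle in carrying out this plan; each argument amounts to a direct substitution into \eqref{toyEx} followed by one standard observation about singular values. The noteworthy feature of this example is not the difficulty of the verification but the conclusion that the minima for $s_1$ and $s_2$ occur at the \emph{distinct} points $z_1=0$ and $z_2=1$, which is consistent with Theorem~\ref{MSVP}: the matrix $F(0)$ is invertible, so that result prohibits \emph{all} singular values from simultaneously attaining their minima there, and indeed $s_2$ does not.
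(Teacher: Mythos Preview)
Your proposal is correct and follows essentially the same approach as the paper. For $s_2$, both you and the paper observe that $\det F(1)=0$ forces $s_2(F(1))=0$, an automatic global minimum; for $s_1$, the paper invokes the remark after Theorem~\ref{MONPmaxVector} (using that $z\mapsto F(z)x_0$ is constant for $x_0=[1,0]^T$), which is exactly the inequality $\|F(z)\|\geq\|F(z)e_1\|=1$ you write out directly, together with the observation that $\|F(0)\|=1$.
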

\begin{proof}
			The remark following the proof of Theorem \ref{MONPmaxVector} shows 
			that $s_1(F(z))$ has a minimum at $z_1=0$; indeed, $z\mapsto F(z)x_0$ 
			is constant when $x_0=[1,0]^{T}$.  On the other hand, if $z_2=1$, then
			\[	F(z_2)=\left[
					\begin{array}{cc}
								1	&	1\\
								0	&	0
					\end{array}\right]	\]
			satisfies $s_2(F(z_2))=0$ because $\det F(z_2)=0$.  In particular, 
			$s_2(F(z))$ has a minimum at $z_2=1$.
\end{proof}

Finally, it is worth mentioning that a singular value of a matrix function 
may attain its minimum value at specified locations.  For instance, when $g$ 
and $h$ are analytic, the function defined by
\begin{equation}
			K(z)=\left[
			\begin{array}{cc}
						g(z)	&	1\\
						0			&	h(z)
			\end{array}\right]
\end{equation}
satisfies $s_2(K(z))=0$ at every zero of $g$ and $h$.

\section{Return to the resolvent and matrix exponential.}\label{sectionExamples}

With the wisdom acquired about the norms and singular values of 
analytic matrix-valued functions, we now return to the resolvent and 
matrix exponential of a given matrix. To simplify our notation, let 
$R_{A}(z)$ denote the resolvent of $A\in\M_n$ at $z$, i.e.,  
\[	R_{A}(z)=(A-zI)^{-1}\;\text{ for }z\in\C\setminus\sigma(A).	\]
Also, set $L_{A}(z)=A-zI$.

Let $\Omega$ be a region of $\C\setminus\sigma(A)$. By Theorem 
\ref{singValueMaxPrincipleThm}, the singular values $s_k(L_A(z))$
and likewise $s_k(R_A(z))$ cannot all attain a maximum value on 
$\Omega$ as functions.  Recalling that 
\begin{equation}\label{skRaLa}
			s_k(R_A(z))=1/s_{n-k+1}(L_A(z))\;\text{ when }1\leq k\leq n,
\end{equation}
it follows that the functions $s_k(R_A(z))$ cannot all attain a maximum nor
a minimum on $\Omega$; in fact, this holds for $k=1$ and $k=n$, respectively, 
as shown below.\footnote{The first inequality in Theorem \ref{resolventThm} 
was observed by Daniluk \cite{D} for resolvents of operators on a complex 
Hilbert space.}

\begin{theorem}\label{resolventThm}
			If $A\in\M_{n}$ and $\Omega$ is any region of $\C\setminus\sigma(A)$, then
			\[	s_1(R_{A}(z))<\sup_{\zeta\in\Omega}s_1(R_{A}(\zeta))
					\;\text{ and }\;
					s_{n}(R_{A}(\zeta))>\inf_{\zeta\in\Omega}s_{n}(R_{A}(\zeta))
					\;\text{for all }z\in\Omega.	\]
			In particular, the functions $s_1(R_{A}(z))$ and $s_{n}(R_{A}(z))$
			are nonconstant on $\Omega$.
\end{theorem}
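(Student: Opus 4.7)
The plan is to prove each inequality by contradiction, in both cases applying Theorem \ref{MONPmaxVector} to a suitable matrix-valued analytic function on $\Omega$. Both arguments will follow the same template: if the asserted strict inequality fails for some $z \in \Omega$, then a naturally associated operator norm attains its maximum on $\Omega$, and the ``constant direction'' provided by Theorem \ref{MONPmaxVector} collapses to the statement that some unit vector is zero.

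For the first inequality, I would suppose for contradiction that $s_1(R_A(z_0)) = \sup_{\zeta\in\Omega} s_1(R_A(\zeta))$ for some $z_0\in\Omega$ (if this supremum is $+\infty$, the inequality is trivial). Since $s_1(R_A(z)) = \|R_A(z)\|$ and $R_A$ is analytic on $\Omega$, Theorem \ref{MONPmaxVector} produces a unit maximizing vector $x_0$ for $R_A(z_0)$ such that $R_A(z)x_0 = R_A(z_0)x_0 =: y_0$ for every $z\in\Omega$. Rewriting as $x_0 = (A-zI)y_0 = Ay_0 - zy_0$ and comparing this identity at two distinct points of $\Omega$ (which exist because $\Omega$ is open) forces $y_0 = 0$, hence $x_0 = 0$, contradicting $\|x_0\| = 1$.

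For the second inequality, I would invoke \eqref{skRaLa} to rewrite $s_n(R_A(z)) = 1/s_1(L_A(z)) = 1/\|L_A(z)\|$, so that $s_n(R_A(z))$ attains its infimum at $z_0$ if and only if $\|L_A(z)\| = \|A-zI\|$ attains its supremum at $z_0$ (again the case $\inf = 0$ is handled automatically because $R_A(z)$ is always invertible). Since $L_A$ is polynomial and thus analytic on $\Omega$, Theorem \ref{MONPmaxVector} again furnishes a unit vector $x_0$ with $L_A(z)x_0 = L_A(z_0)x_0$ for all $z\in\Omega$; this collapses to $(z-z_0)x_0 = 0$, which at any $z\neq z_0$ in $\Omega$ forces $x_0 = 0$, a contradiction.

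The ``in particular'' clause about nonconstancy is then immediate: a constant function attains its supremum and infimum at every point, so the strict inequalities just established preclude constancy on $\Omega$. I do not anticipate any substantive obstacle; the proof is a direct application of Theorem \ref{MONPmaxVector} combined with elementary manipulation of the affine polynomial $A-zI$, with only mild care needed to dispose of the degenerate cases where the relevant $\sup/\inf$ is $+\infty$ or $0$.
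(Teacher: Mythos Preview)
Your proposal is correct and follows essentially the same approach as the paper: both inequalities are proved by contradiction via Theorem \ref{MONPmaxVector}, applied to $R_A$ for the first and to $L_A$ for the second (using \eqref{skRaLa}). The only stylistic difference is that the paper invokes the \emph{derivative} conclusion of Theorem \ref{MONPmaxVector}---computing $R_A'(z_0)=R_A^2(z_0)$ via the resolvent identity and $L_A'(w_0)=-I$---whereas you use the equivalent \emph{constancy} conclusion $F(z)x_0\equiv F(z_0)x_0$ and compare values at two points; your route avoids the resolvent-identity computation but is otherwise the same argument.
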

\begin{proof}
			To obtain a contradiction, assume instead there are points 
			$z_0,w_0\in\Omega$ such that 
			\[	s_1(R_{A}(z_0))\geq s_1(R_{A}(\zeta))\;\text{ or }\; 
					s_1(L_{A}(w_0))\geq s_1(L_{A}(\zeta))\;\text{ for all }\zeta\in\Omega	\] 
			(see \eqref{skRaLa}).  By Theorem \ref{MONPmaxVector}, 
			\begin{equation}\label{derivativeConditions}
						R^{\prime}_{A}(z_0)x_0=0\;\text{ or }\;L^{\prime}_{A}(w_0)y_0=0
			\end{equation}
			when $x_0$ and $y_0$ are maximizing vectors for $R_{A}(z_0)$ and
			$L_{A}(w_0)$, respectively.  On the other hand, as			
			\[	R_{A}(z)-R_{A}(z_0)=R_{A}(z)[(A-z_0I)-(A-zI)]R_{A}(z_0)	\]
			for any $z\in\Omega$, we have
			\[	R^{\prime}_{A}(z_0)
					=\lim_{z\to z_0}R_{A}(z)R_{A}(z_0)=R_{A}^2(z_0)	\]
			and clearly $L_{A}^{\prime}(w_0)=-I$.  However, these equations, 
			together with \eqref{derivativeConditions}, imply that
			\[	x_0=L_{A}^2(z_0)R_{A}^2(z_0)x_0=L_{A}^2(z_0)R^{\prime}_{A}(z_0)x_0=0	\]
			or $y_0=-L_{A}^{\prime}(w_0)y_0=0$, which are impossible because 
			$\|x_0\|=\|y_0\|=1$.
\end{proof}

We now turn to the matrix exponential.  Recall that given $T\in\M_n$,
the \textbf{matrix exponential} of $T$ is the $n\times n$ matrix defined by
\[	\exp(T)\Mydef\sum_{n=0}^{\infty}\frac{1}{n!}T^{n}.	\]
It is not difficult to verify that the series above converges for any $T\in\M_{n}$
(say, under the operator norm), and $\exp(T)$ is invertible in $\M_n$ with inverse 
$\exp(-T)$. 

For $A\in\M_n$, we see that the map $z\mapsto\exp(zA)$ is a well-defined matrix-valued
function, analytic on the entire complex plane $\C$, and
\[	\frac{d}{dz}[\exp(zA)]=A\exp(zA)=\exp(zA)A.	\]
Furthermore, a straightforward verification\footnote{Indeed, when $\re z>\|A\|$, the 
function $t\mapsto\exp(t(A-zI))$ has operator norm equal to $e^{-\re(zt)}\|\exp(At)\|$, 
which tends to zero as $t\to\infty$, and so the integral over $[0,\infty)$ of its 
derivative equals the identity matrix.} reveals that
\begin{equation}\label{resolventViaLaplace}
			(zI-A)^{-1}=\int_{0}^{\infty}e^{-zt}\exp(tA)\,dt\;\text{ when }\re z>\|A\|,
\end{equation}
while term-by-term integration of the power series representations for the exponential
and the resolvent gives
\begin{equation}\label{expViaResolvent}
			\exp(tA)=\frac{1}{2\pi i}\int_{\Gamma_{r}}e^{t\xi}(\xi I-A)^{-1}\,d\xi,
\end{equation}
where $\Gamma_{r}$ denotes any circle of radius $r>\|A\|$ centered at the origin.

In addition to the intimate relationship between the resolvent and the matrix exponential
(as described in \eqref{resolventViaLaplace} and \eqref{expViaResolvent}), intuition from 
the case of scalar-valued functions may suggest that, in analogy to Theorem 
\ref{resolventThm}, the functions $s_1(\exp(zA))$ and $s_n(\exp(zA))$ should 
not attain their maximum and minimum values, respectively,\footnote{After all, 
for fixed $a\in\C$, $|e^{za}|$ cannot attain its maximum nor minimum values
over any region $\Omega$.} over any region $\Omega$ of $\C$.  This is in fact
\emph{false}.  Notice that
\[	\exp(zA)=\left[
		\begin{array}{cc}
					1	&	0\\
					0	&	e^{z}
		\end{array}\right]
		\;\text{ with }\;
		A=\left[
		\begin{array}{cc}
					0	&	0\\
					0	&	1
		\end{array}\right]	\]
provides a counterexample; indeed, computation reveals that
\[	s_{1}(\exp(zA))=\max\{1,e^{\re z}\}\;\text{ and }\;
		s_{2}(\exp(zA))=\min\{1,e^{\re z}\}.	\]
Thus, $s_{1}(\exp(zA))$ and $s_2(\exp(zA))$ are constant when $\re z<0$ 
and $\re z>0$, respectively.

Finally, we would like to propose a question for further investigation.
Given an analytic function $F:\Omega\to\M_n$ such that $\|F(z)\|$ attains 
its maximum in $\Omega$, Theorem \ref{factoringFzThm} not only describes 
the structure of $F$, it also implies that $\|F(z)\|=\|F(z_0)\|$ for all $z\in
\Omega$.  So, in a sense, it is rare for $\|F(z)\|$ to attain its maximum.  
Instead, what may be less rare is for $\|F(z)\|$ to attain a \emph{minimum} 
value (see Theorem \ref{toyExThm}).  

In fact, the remark made after the proof of Theorem \ref{MONPmaxVector}
already gives a sufficient condition for $\|F(z)\|$ to have a minimum at 
$z_0$, namely when $z\mapsto F(z)x_0$ is constant for some maximizing 
vector for $F(z_0)$.  Furthermore, by completely analogous reasoning, 
a sufficient condition for $s_{n}(F(z))$ to attain a maximum at $z_0$
is that $z\mapsto F(z)x_0$ is constant for some minimizing\footnote{A 
vector $x_0$ is said to be a \textbf{minimizing vector} for $A\in\M_n$ 
if $\|Ax_0\|=\min\{\|Ax\|:\|x\|=1\}$.} vector for $F(z_0)$.  For example, 
in light of this, it may be verified that $s_1(F(z))$ has a minimum at 
$z=0$ and $s_{2}(F(z))$ has a maximum at $z=0$ when $F(z)$ is the function 
in \eqref{toyEx}. This leads one to wonder what necessary and sufficient 
conditions permit $s_{1}(F(z))$ to attain a minimum and $s_{n}(F(z))$ to 
attain a maximum over a region $\Omega$?  Is it more attainable to consider 
the special case $F(z)=(A-zI)^{-1}$?  How about when $F(z)=\exp(zA)$?\medskip

\textbf{Acknowledgments.}
I wish to thank Cara D. Brooks whose valuable comments helped improve 
the exposition of the paper tremendously.  Also, I am grateful to Nicholas 
Seguin whose sustained interest in this project helped me see it to
completion.  Finally, I thank the referees and editor for their helpful
suggestions.

\end{document}